\theoremstyle{plain}
\newtheorem{theorem}{Theorem}
\numberwithin{equation}{section}
\newcommand{\ra}{\rightarrow}
\newcommand{\mdot}{\,\begin{picture}(-1,-1)(-1,-1)\circle*{2}\end{picture}\ }
\def\barroman#1{\sbox0{#1}\dimen0=\dimexpr\wd0+1pt\relax
  \makebox[\dimen0]{\rlap{\vrule width\dimen0 height 0.06ex depth 0.06ex}%
    \rlap{\vrule width\dimen0 height\dimexpr\ht0+0.03ex\relax 
            depth\dimexpr-\ht0+0.09ex\relax}%
    \kern.5pt#1\kern.5pt}}
\begin{document}

\title {Painlev\'e II transcendents and their squares}

\date{}

\author[P.L. Robinson]{P.L. Robinson}

\address{Department of Mathematics \\ University of Florida \\ Gainesville FL 32611  USA }

\email[]{paulr@ufl.edu}

\subjclass{} \keywords{}

\begin{abstract}

We examine the relationship between the homogeneous second Painlev\'e equation and equation ${\bf XX}$ from the master list recorded by Ince. 

\end{abstract}

\maketitle

\medbreak

\section*{Introduction} 

\medbreak 

Prompted by findings of Kowalevski in her analysis of gyroscopic motion, Picard initiated a study of second-order ordinary differential equations of the form 
$$\frac{{\rm d}^2 w}{{\rm d} z^2} = F\Big(z, w, \frac{{\rm d} w}{{\rm d} z}\Big)$$
in which the right side is analytic in $z$ but rational in $w$ and ${\rm d} w/{\rm d} z$. Painlev\'e and his student Gambier classified all such ODEs possessing the property that their solutions have only poles among their movable singularities. Their classification resulted in 50 canonical forms: these are listed explicitly in [Ince] and each is traditionally labelled by a Roman numeral according to its position in this list. Six of these 50 equations are separated from this list, freshly labelled ${\bf PI}$ through ${\bf PVI}$ and called the {\it Painlev\'e equations}, their solutions being {\it Painlev\'e transcendents}; solutions to all 50 may be expressed in terms of solutions to these six along with solutions to `classical' ODEs (including linear equations and those that define elliptic functions). 

\medbreak 

In fact, one of the Painlev\'e equations is very intimately related to another in the list of 50. The second Painlev\'e equation ${\bf PII}$ has the form 
$$\frac{{\rm d}^2 w}{{\rm d} z^2} = 2 w^3 + z w + \alpha$$
in which $\alpha$ is a parameter. The special case in which $\alpha = 0$ is called homogeneous; we shall label it ${\bf PII}_0$. The twentieth equation {\bf XX} in the list of 50 has the form 
$$\frac{{\rm d}^2 w}{{\rm d} z^2} = \frac{1}{2 w}\Big( \frac{{\rm d} w}{{\rm d} z}\Big)^2 + 4 w^2 + 2 z w$$
in which the quotient on the right is to be understood as a limit when appropriate. It is asserted on page 337 of [Ince] that {\bf XX} is equivalent to ${\bf PII}_0$ by squaring. 

\medbreak 

Here we examine more closely certain aspects of the relationship between {\bf XX} and ${\bf PII}_0$. To be specific, we restrict attention to {\bf XX} and ${\bf PII}_0$ as {\it real} equations with real solutions. In this context, squares of nowhere-zero solutions to ${\bf PII}_0$ satisfy ${\bf XX}$ while positive square-roots of strictly positive solutions to {\bf XX} satisfy ${\bf PII}_0$. When solutions are allowed to acquire (isolated) zeros we find that there is a sudden change in behaviour, which we analyze in detail. Our examination brings out a significant property of equation {\bf XX}. The presence of $w$ in the denominator on the right side of {\bf XX} means that the standard (local) existence-uniqueness theorem for a second-order ODE does not apply to {\bf XX} when the initial data involve a zero of the solution. We observe that further differentiation leads to a third-order ODE in which the right side is polynomial in all variables; accordingly, the standard existence-uniqueness theorem for a {\it third}-order ODE applies to this equation. As a direct consequence, a solution to {\bf XX} that vanishes at a point is uniquely determined by the value of its {\it second} derivative there. 

\medbreak 

\section*{{\bf XX} and an associated third-order ODE} 

\medbreak 

As we mentioned in the Introduction, we shall regard {\bf XX} as a real ordinary differential equation with real solutions. Thus, we shall write this equation as 
\begin{equation} \label{XX} 
\overset{\mdot \mdot}{S} = \frac{\overset{\mdot}{S}^2}{2 S} + 4 S^2 + 2 t S, \tag{{$\bf XX$}}
\end{equation}
where a superior dot $^{\mdot}$ signifies the derivative and where the ratio $\overset{\mdot}{S}^2/{2 S}$ is to be understood as a limit when appropriate. It follows that the derivative of a solution vanishes wherever the solution itself vanishes: if $S(a) = 0$ then automatically $\overset{\mdot}{S}(a) = 0$ also; this has consequences, as we shall see. 

\medbreak 

Notice that {\bf XX} has the form 
$$\overset{\mdot \mdot}{S} = F(t, S, \overset{\mdot}{S})$$
in which the right side is rational, with $S$ in the denominator. In consequence of this, the standard (local) existence-uniqueness theorem for second-order ODEs applies to {\bf XX} away from zeros: there exists a unique solution $S$ to {\bf XX} for which $S(a) \ne 0$ and $\overset{\mdot}{S}(a)$ have specified values. The standard existence-uniqueness theorem fails when the initial data involve a zero of the solution: indeed, $S(a) = 0$ entails $\overset{\mdot}{S}(a) = 0$ as noted above; were the standard theorem to apply, it would force $S$ to vanish identically on its interval domain. We analyze further the case of an isolated zero below. 

\medbreak 

It follows at once from {\bf XX} that each solution $S$ is thrice-differentiable away from its zeros: calculation of the third derivative starts conveniently from the reformulation
$$2 S \overset{\mdot \mdot}{S} = \overset{\mdot}{S}^2 + 8 S^3 + 4 t S^2;$$
after differentiation, $2 \overset{\mdot}{S} \overset{\mdot \mdot}{S}$ falls from each side so that
$$2 S \: \overset{\mdot \mdot \mdot}{S} = 24 S^2 \overset{\mdot}{S} + 8 t S \overset{\mdot}{S} + 4 S^2$$
and therefore 
$$\overset{\mdot \mdot \mdot}{S} = 12 S \overset{\mdot}{S} + 4 t \overset{\mdot}{S} + 2 S.$$
Now let the solution $S$ to {\bf XX} have an isolated zero at $a$. The understanding that the ratio on the right side of {\bf XX} is defined as a limit ensures that $\overset{\mdot \mdot}{S}$ is continuous at $a$. As we let $(a \ne ) \; t \ra a$ in the equation
$$\overset{\mdot \mdot \mdot}{S}(t) = 12 S(t) \overset{\mdot}{S}(t) + 4 t \overset{\mdot}{S}(t) + 2 S(t)$$
both $S(t) \ra S(a) = 0$ and $\overset{\mdot}{S}(t) \ra \overset{\mdot}{S}(a) = 0$ so that $\overset{\mdot \mdot \mdot}{S}(t) \ra 0$ also. We deduce that $S$ is also thrice-differentiable at $a$ with $\overset{\mdot \mdot \mdot}{S}(a) = 0$, by an application of the mean value theorem to the continuous function $\overset{\mdot \mdot}{S}$. 

\medbreak 

We may record the result of our recent deliberations as the following theorem; in its statement, we assume that the zeros of $S$ are isolated. 

\medbreak 

\begin{theorem} \label{X'}
If $S$ is a solution to {\bf XX} then $S$ satisfies the third-order equation 
\begin{equation} \label{XX'} 
\overset{\mdot \mdot \mdot}{S} = 12 S \overset{\mdot}{S} + 4 t \overset{\mdot}{S} + 2 S. \tag{{$\bf XX'$}}
\end{equation}
\end{theorem}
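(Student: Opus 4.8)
The plan is to derive \eqref{XX'} from \eqref{XX} by a single differentiation, handling the region away from the zeros of $S$ and the isolated zeros themselves as two separate regimes. Away from a zero, the cleanest route is to clear the denominator in \eqref{XX} and write $2 S \overset{\mdot \mdot}{S} = \overset{\mdot}{S}^2 + 8 S^3 + 4 t S^2$, a relation that is polynomial in $t, S, \overset{\mdot}{S}, \overset{\mdot \mdot}{S}$ and valid wherever $S \ne 0$. Since the right side of \eqref{XX} is rational, each solution is automatically thrice-differentiable there, so I would differentiate this relation, note that the term $2 \overset{\mdot}{S}\overset{\mdot \mdot}{S}$ cancels from both sides, and divide the surviving identity $2 S \overset{\mdot \mdot \mdot}{S} = 24 S^2 \overset{\mdot}{S} + 8 t S \overset{\mdot}{S} + 4 S^2$ by $2 S$. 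This produces \eqref{XX'} at every point where $S$ does not vanish.

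The remaining task, and the only genuine subtlety, is to verify \eqref{XX'} at an isolated zero $a$ of $S$. Here I would first recall from the preceding discussion that $S(a) = 0$ forces $\overset{\mdot}{S}(a) = 0$, and that the limiting convention attached to the ratio in \eqref{XX} makes $\overset{\mdot \mdot}{S}$ continuous at $a$. The right side of \eqref{XX'} is then a continuous function of $t$ near $a$ that vanishes at $a$, so $\overset{\mdot \mdot \mdot}{S}(t) \to 0$ as $t \to a$ through the punctured neighbourhood of $a$ on which $S \ne 0$ (here the isolatedness of the zero is used). The point to establish is that this forces $\overset{\mdot \mdot}{S}$ to be differentiable at $a$ itself, with $\overset{\mdot \mdot \mdot}{S}(a) = 0$; this is precisely the standard real-analysis fact that a function continuous at $a$, differentiable on a punctured neighbourhood of $a$, and whose derivative tends to a limit $L$ as $t \to a$, is differentiable at $a$ with derivative $L$ — a short consequence of the mean value theorem applied to $\overset{\mdot \mdot}{S}$. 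With $\overset{\mdot \mdot \mdot}{S}(a) = 0$ in hand, both sides of \eqref{XX'} vanish at $a$, so the equation holds there as well.

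I expect the main obstacle to be this last differentiability-at-a-zero step: one must check that the hypotheses of the mean-value-theorem lemma are genuinely in force, namely that $\overset{\mdot \mdot}{S}$ is continuous on a full neighbourhood of $a$ rather than merely at $a$ — which is where the limiting convention in \eqref{XX} does its work — and that $S$ is nonvanishing on a punctured neighbourhood of $a$ so that the computation of the first paragraph is available there. Once the two regimes are assembled, the algebraic differentiation away from zeros and the analytic limiting argument at each isolated zero together show that \eqref{XX'} holds throughout the domain of $S$, completing the proof.
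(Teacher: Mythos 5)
Your proposal is correct and follows essentially the same route as the paper: clear the denominator and differentiate away from the zeros, then at an isolated zero use the vanishing of $S(a)$ and $\overset{\mdot}{S}(a)$ together with the continuity of $\overset{\mdot \mdot}{S}$ and the mean value theorem to conclude $\overset{\mdot \mdot \mdot}{S}(a) = 0$. No gaps; the care you take over where $\overset{\mdot \mdot}{S}$ is continuous matches the paper's reliance on the limiting convention in \eqref{XX}.
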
 

\qed 

\medbreak 

Observe that equation ${\bf XX'}$ has the form 
$$\overset{\mdot \mdot \mdot}{S} = G(t, S, \overset{\mdot}{S}, \overset{\mdot \mdot}{S})$$
in which the right side is polynomial in all variables (and $\overset{\mdot \mdot}{S}$ is incidentally absent). The standard (local) existence-uniqueness theorem for a third-order ODE thus applies: there exists a unique solution to ${\bf XX'}$ having specified values of $S(a), \; \overset{\mdot}{S}(a)$ and $\overset{\mdot \mdot}{S}(a)$.  

\medbreak 

This has an immediate application to {\bf XX} itself. 

\medbreak 

\begin{theorem} \label{ne}
Let $S$ be a solution to {\bf XX}. If $S$ has an isolated zero at $a$ then $\overset{\mdot \mdot}{S}(a) \ne 0.$
\end{theorem} 

\begin{proof} 
According to Theorem \ref{X'}, $S$ is also a solution to ${\bf XX}'$. As we have seen, if $S(a) = 0$ then $\overset{\mdot}{S}(a) = 0$ automatically. If also $\overset{\mdot \mdot}{S}(a) = 0$ then the standard uniqueness theorem for solutions to the third-order equation ${\bf XX'}$ forces $S = 0$ and so prevents the zero at $a$ from being isolated. 
\end{proof}

\medbreak 

\section*{{\bf XX} in relation to homogeneous PII }

\medbreak 

Now we explore the relationship between ${\bf XX}$ and the homogeneous second Painlev\'e equation, which we record as 
\begin{equation} \label{PII} 
\overset{\mdot \mdot}{s} = 2 s^3 + t s \tag{${\bf PII}_0$}
\end{equation} 
and view in real terms. Throughout what follows, the intention is that lower case $s$ should suggest a solution to ${\bf PII}_0$ while upper case $S$ should suggest a solution to ${\bf XX}$. 

\medbreak 

Let $s$ be a nowhere-zero solution to ${\bf PII}_0$ and define $S := s^2$. Then $\overset{\mdot}{S} = 2 s \overset{\mdot}{s}$ and 
$$\overset{\mdot \mdot}{S} = 2 s \overset{\mdot \mdot}{s} + 2 \overset{\mdot}{s}^2$$
so that $\overset{\mdot}{s} = \overset{\mdot}{S}/2 s$ and 
$$\overset{\mdot \mdot}{S} = 2 s (2 s^3 + t s) + 2 \Big(\frac{\overset{\mdot}{S}}{2 s} \Big)^2 = 4 s^4 + 2 t s^2 + \frac{\overset{\mdot}{S}^2}{2 s^2}$$
whence 
$$\overset{\mdot \mdot}{S} = 4 S^2 + 2 t S + \frac{\overset{\mdot}{S}^2}{2 S}$$
which proves that $S$ is a solution to ${\bf XX}$. In the opposite direction, let $S$ be a strictly positive solution to ${\bf XX}$ and define $s := \sqrt S$ to be its positive square-root. A similar direct calculation using $\overset{\mdot}{S} = 2 s \overset{\mdot}{s}$ and the fact that $S$ satisfies ${\bf XX}$ shows that 
$$2 s \overset{\mdot \mdot}{s} = \overset{\mdot \mdot}{S} - 2 \overset{\mdot}{s}^2 =  \frac{\overset{\mdot}{S}^2}{2 S} + 4 S^2 + 2 t S - 2 \overset{\mdot}{s}^2 = 4 S^2 + 2 t S = 4 s^4 + 2 t s^2$$
so by cancellation 
$$\overset{\mdot \mdot}{s} = 2 s^3 + t s$$
and $s$ is a solution to ${\bf PII}_0$. 

\medbreak 

\begin{theorem} \label{nozero}
If $s$ is a nowhere-zero solution to ${\bf PII}_0$ then $s^2$ is a solution to ${\bf XX}$. If $S$ is a strictly positive solution to ${\bf XX}$ then $\sqrt S$ is a solution to ${\bf PII}_0.$ 
\end{theorem}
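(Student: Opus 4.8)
The plan is to prove both implications by direct substitution, exploiting the elementary fact that differentiating $S = s^2$ twice produces exactly the terms appearing in ${\bf XX}$ once one accounts for the relation $\overset{\mdot}{S} = 2 s \overset{\mdot}{s}$. No structural machinery is needed; the content is entirely in keeping track of the substitutions and the divisions by $s$.

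First I would treat the forward direction. Assuming $s$ is a nowhere-zero solution of ${\bf PII}_0$ and setting $S := s^2$, I would compute $\overset{\mdot}{S} = 2 s \overset{\mdot}{s}$ and $\overset{\mdot \mdot}{S} = 2 s \overset{\mdot \mdot}{s} + 2 \overset{\mdot}{s}^2$. Because $s$ never vanishes, the relation $\overset{\mdot}{s} = \overset{\mdot}{S}/(2 s)$ is legitimate, so that $2 \overset{\mdot}{s}^2 = \overset{\mdot}{S}^2/(2 S)$. Substituting $\overset{\mdot \mdot}{s} = 2 s^3 + t s$ from ${\bf PII}_0$ then yields $\overset{\mdot \mdot}{S} = 4 s^4 + 2 t s^2 + \overset{\mdot}{S}^2/(2 S)$, and rewriting $s^2 = S$ gives precisely ${\bf XX}$.

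For the reverse direction I would start from a strictly positive solution $S$ of ${\bf XX}$ and set $s := \sqrt{S}$; positivity of $S$ guarantees that $s$ is as differentiable as $S$ (the square-root function being smooth on $(0, \infty)$) and that $s$ is itself nowhere zero. Again $S = s^2$ and $\overset{\mdot}{S} = 2 s \overset{\mdot}{s}$, so $\overset{\mdot}{S}^2/(2 S) = 2 \overset{\mdot}{s}^2$; substituting this into ${\bf XX}$ together with $\overset{\mdot \mdot}{S} = 2 s \overset{\mdot \mdot}{s} + 2 \overset{\mdot}{s}^2$, the $2 \overset{\mdot}{s}^2$ terms cancel, leaving $2 s \overset{\mdot \mdot}{s} = 4 s^4 + 2 t s^2$. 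Dividing by $2 s \ne 0$ recovers ${\bf PII}_0$.

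Since both directions are pure computation, I do not anticipate a genuine obstacle. The only points requiring care are the licence to divide by $s$, which is supplied by the hypotheses ``nowhere-zero'' and ``strictly positive'' respectively, and the remark that $\sqrt{S}$ inherits enough regularity from $S$ to be differentiated twice; this is immediate because $S$ stays away from $0$, so the composition with the smooth square-root map causes no trouble, and the ``limit'' proviso attached to the quotient in ${\bf XX}$ is never invoked in either argument.
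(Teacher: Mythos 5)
Your proposal is correct and follows essentially the same route as the paper: both directions are established by the same direct substitution, using $\overset{\mdot}{S} = 2 s \overset{\mdot}{s}$ and the identity $\overset{\mdot}{S}^2/(2S) = 2\overset{\mdot}{s}^2$, with the hypotheses supplying the licence to divide by $s$. Your added remark that $\sqrt{S}$ inherits twice-differentiability because the square root is smooth away from $0$ is a point the paper leaves implicit, but the argument is otherwise identical.
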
 

\qed

\medbreak 

The presence of zeros introduces complications. As in the previous section, we take zeros to be isolated; more precisely, we consider a function (an $s$ or an $S$ as the case may be) that is defined on an open interval $I$ and vanishes at precisely one point $a \in I$. 

\medbreak 

\begin{theorem} \label{square}
If $s$ satisfies ${\bf PII}_0$ on $I$ and is zero only at $a \in I$ then $s^2$ satisfies ${\bf XX}$ on $I$. 
\end{theorem}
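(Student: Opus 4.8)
The plan is to split the interval $I$ into the punctured region $I \setminus \{a\}$, where $s$ is nowhere zero, and the single exceptional point $a$. On $I \setminus \{a\}$ there is nothing new to do: the direct calculation performed just before Theorem~\ref{nozero} goes through verbatim, or, if one prefers, one restricts to each of the two open subintervals $I \cap (-\infty, a)$ and $I \cap (a, \infty)$ and applies Theorem~\ref{nozero} directly; either way $S := s^2$ satisfies ${\bf XX}$ throughout $I \setminus \{a\}$. Thus the entire substance of the theorem is the claim that ${\bf XX}$, read with its limit convention for the quotient $\overset{\mdot}{S}^2/2S$, persists at $a$.

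First I would note that $s$ is twice continuously differentiable on all of $I$: since $s$ solves ${\bf PII}_0$, the identity $\overset{\mdot\mdot}{s} = 2 s^3 + t s$ exhibits $\overset{\mdot\mdot}{s}$ as a polynomial in $s$ and $t$, hence continuous, so $S = s^2 \in C^2(I)$ with $\overset{\mdot}{S} = 2 s \overset{\mdot}{s}$ and $\overset{\mdot\mdot}{S} = 2 s \overset{\mdot\mdot}{s} + 2 \overset{\mdot}{s}^2$; in particular $\overset{\mdot\mdot}{S}$ is continuous at $a$ and, because $s(a) = 0$, one gets $\overset{\mdot\mdot}{S}(a) = 2 \overset{\mdot}{s}(a)^2$, the value taken by the left side of ${\bf XX}$ at $a$. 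The key observation is then that the offending quotient carries a removable singularity: for $t \neq a$ one has $\overset{\mdot}{S}^2 / 2S = (2 s \overset{\mdot}{s})^2 / (2 s^2) = 2 \overset{\mdot}{s}^2$, a function continuous on all of $I$. Hence the limit prescribed by the convention exists and equals $\lim_{t \to a} 2 \overset{\mdot}{s}(t)^2 = 2 \overset{\mdot}{s}(a)^2$, while the remaining terms $4 S^2 + 2 t S$ vanish at $a$ because $S(a) = 0$. So the right side of ${\bf XX}$ at $a$ equals $2 \overset{\mdot}{s}(a)^2$, which matches $\overset{\mdot\mdot}{S}(a)$, and ${\bf XX}$ holds at $a$ as well.

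I do not expect a genuine obstacle here; the only delicate point is the interpretation of the $0/0$ expression at $a$, and the resolution is precisely that after cancelling the factor $s^2$ the quotient is not really indeterminate — it equals $2 \overset{\mdot}{s}^2$ identically off $a$ and extends continuously across. As a consistency check with Theorem~\ref{ne}, one may observe that $\overset{\mdot}{s}(a) \neq 0$ (otherwise uniqueness for the second-order equation ${\bf PII}_0$ would force $s \equiv 0$), so in fact $\overset{\mdot\mdot}{S}(a) = 2 \overset{\mdot}{s}(a)^2 > 0$, in agreement with the nonvanishing of the second derivative of a solution to ${\bf XX}$ at an isolated zero.
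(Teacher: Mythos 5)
Your proof is correct and takes essentially the same route as the paper's: both reduce to the point $a$ via Theorem \ref{nozero}, compute $\overset{\mdot \mdot}{S}(a) = 2\overset{\mdot}{s}(a)^2$, and observe that the quotient $\overset{\mdot}{S}^2/2S$ equals $2\overset{\mdot}{s}^2$ off $a$ and so has limit $2\overset{\mdot}{s}(a)^2$, matching the left side. Your added consistency check against Theorem \ref{ne} is a nice touch but not part of the paper's argument.
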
 

\begin{proof} 
 Theorem \ref{nozero} guarantees that the twice-differentiable function $S : = s^2$ satisfies ${\bf XX}$ on $I \setminus \{a\}$; we must examine its behaviour at $a$. Note that  
$$4 S(a)^2 + 2 a S(a) = 0$$
and 
$$\overset{\mdot \mdot}{S}(a) = 2 s(a) \overset{\mdot \mdot}{s}(a) + 2 \overset{\mdot}{s}(a)^2 = 2 \overset{\mdot}{s}(a)^2$$
because $s$ vanishes at $a$. Note further that if $I \ni t \ne a$ then 
$$\frac{\overset{\mdot}{S}(t)^2}{2 S(t)} = \frac{(2 s(t) \overset{\mdot}{s}(t))^2}{2 s(t)^2} = 2 \overset{\mdot}{s}(t)^2$$ 
which converges to $2 \overset{\mdot}{s}(a)^2$ as $t \ra a$. We conclude that $S$ satisfies ${\bf XX}$ at $a$ too.  
\end{proof} 

\medbreak 

Thus squaring yields no surprises. The taking of square-roots is more interesting. 

\medbreak 

We begin with a negative result. 

\medbreak 

\begin{theorem} \label{notroot}
If $S$ satisfies ${\bf XX}$ on $I$ and is strictly positive except for a zero at $a \in I$ then $\sqrt S$ does not satisfy ${\bf PII}_0$ at $a$. 
\end{theorem}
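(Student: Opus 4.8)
The plan is to argue by contradiction, taking Theorem~\ref{ne} as the essential input. Write $s := \sqrt S$, so that $s \ge 0$ on $I$, $s(a) = 0$, and $s^2 = S$ identically. To say that $s$ ``satisfies ${\bf PII}_0$ at $a$'' presupposes, at the very least, that $s$ is twice-differentiable at $a$; I shall derive a contradiction from this differentiability alone, so that the conclusion is in fact rather more emphatic than the statement claims.

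First I would record two elementary facts about $S$ at $a$. Since $S$ is non-negative on $I$ and vanishes at the interior point $a$, it attains a minimum over $I$ there; being twice-differentiable at $a$ (by hypothesis together with the limit convention attached to {\bf XX}), it follows from Fermat's criterion that $\dot S(a) = 0$ and $\ddot S(a) \ge 0$. Theorem~\ref{ne} supplies $\ddot S(a) \ne 0$ — the zero of $S$ at $a$ being isolated, indeed the only zero on $I$ — so in fact $\ddot S(a) > 0$.

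Next, supposing $s$ twice-differentiable at $a$, the same minimum-at-$a$ reasoning applied to $s$ (which is non-negative on $I$, differentiable near $a$, and zero at $a$) gives $\dot s(a) = 0$. Differentiating the identity $S = s^2$ twice and evaluating at $a$ then yields
$$\ddot S(a) = 2 \dot s(a)^2 + 2 s(a) \ddot s(a) = 0,$$
since both $s(a)$ and $\dot s(a)$ vanish; this contradicts $\ddot S(a) \ne 0$. Hence $\sqrt S$ cannot even be twice-differentiable at $a$, let alone satisfy ${\bf PII}_0$ there.

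I do not anticipate a genuine obstacle. The only points that call for a little care are the interpretation of ``satisfies ${\bf PII}_0$ at $a$'' (which I take to include twice-differentiability at $a$) and the verification that Theorem~\ref{ne} applies. For vividness one might add the complementary observation that near $a$ one has $S(t) = \tfrac12 \ddot S(a)(t-a)^2 + o\big((t-a)^2\big)$ with $\ddot S(a) > 0$, so $\sqrt{S(t)}$ behaves like $\sqrt{\ddot S(a)/2}\,|t-a|$ and $\sqrt S$ acquires a corner at $a$, its one-sided derivatives being $\pm\sqrt{\ddot S(a)/2}$; this displays the failure of differentiability concretely, though it is not needed for the argument above.
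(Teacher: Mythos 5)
Your argument is correct and is essentially the paper's own proof — specifically its second argument: assuming $s=\sqrt S$ twice-differentiable at $a$ forces $\dot s(a)=0$ and hence $\ddot S(a)=2\dot s(a)^2+2s(a)\ddot s(a)=0$, contradicting Theorem~\ref{ne}. (The paper also records an alternative first argument via the uniqueness theorem for ${\bf PII}_0$, but your route matches its second, stronger one, which likewise shows $\sqrt S$ is not even twice-differentiable at $a$.)
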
 

\begin{proof} 
We offer two based on standard uniqueness theorems, the one for ${\bf PII}_0$ and the other for ${\bf XX'}$. Let $s : = \sqrt S$.  (1) Suppose that $s$ were to satisfy ${\bf PII}_0$: as $s$ is non-negative, not only $s(a) = 0$ but also $\overset{\mdot}{s}(a) = 0$; now standard uniqueness forces $s = 0$ so that the zero $a$ is not isolated. (2) In fact, we claim that $s$ is not even twice-differentiable at $a$; for suppose it were. Again, $s(a) = 0$ and $\overset{\mdot}{s}(a) = 0$: as $S = s^2$ it follows that 
$$\overset{\mdot \mdot}{S}(a) = 2 s(a) \overset{\mdot \mdot}{s}(a) + 2 \overset{\mdot}{s}(a)^2 =0;$$ 
as $a$ is an isolated zero, this contradicts Theorem \ref{ne}. 
\end{proof} 

\medbreak 

Nevertheless, a solution $S$ to ${\bf XX}$ satisfying the hypotheses of this theorem {\it is} the square of a solution $s$ to ${\bf PII}_0$; it is simply the case that $s$ must change sign at $a$. 

\medbreak 

\begin{theorem} \label{root}
If $S$ is a solution to ${\bf XX}$ on $I$ and is strictly positive except for a zero at $a \in I$ then there exists a solution $s$ to ${\bf PII}_0$ on $I$ such that $S = s^2.$ 
\end{theorem}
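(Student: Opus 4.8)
Write $I = (\alpha, \beta)$ with $\alpha < a < \beta$. Since $S$ is non‑negative on $I$ and $S(a) = 0$, the point $a$ is a minimum of $S$, so $\overset{\mdot\mdot}{S}(a) \ge 0$; by Theorem \ref{ne} this is strict, so $\overset{\mdot\mdot}{S}(a) > 0$ and we may set $c := \sqrt{\tfrac12\,\overset{\mdot\mdot}{S}(a)} > 0$. On each of the open intervals $(\alpha, a)$ and $(a, \beta)$ the function $S$ is a strictly positive solution of ${\bf XX}$, so by Theorem \ref{nozero} its positive square root solves ${\bf PII}_0$ there; since ${\bf PII}_0$ is odd in the unknown, the negative square root does too. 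The idea is therefore to let $s$ \emph{change sign} at $a$: set $s = -\sqrt S$ on $(\alpha, a)$, $s(a) = 0$, and $s = +\sqrt S$ on $(a, \beta)$. In contrast to the pure positive root of Theorem \ref{notroot}, this sign‑flipped root has a chance of being smooth across $a$, because near $a$ one has $S(t) \sim c^2 (t-a)^2$, so $\pm\sqrt{S(t)} \sim \pm c\,|t-a|$, whose sign‑flipped version behaves like $c\,(t-a)$.

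\textbf{Making the gluing rigorous.} What remains is to check that this $s$ is twice differentiable at $a$ and satisfies ${\bf PII}_0$ there. The cleanest route is to invoke the local existence theorem for ${\bf PII}_0$ to obtain the solution $\tilde s$ with $\tilde s(a) = 0$ and $\overset{\mdot}{\tilde s}(a) = c$, defined on some interval $J \ni a$; since $\overset{\mdot}{\tilde s}(a) = c \ne 0$ its zero at $a$ is isolated, so after shrinking $J$ we may assume $\tilde s \ne 0$ on $J \setminus \{a\}$ with $\tilde s < 0$ to the left of $a$ and $\tilde s > 0$ to the right. By Theorem \ref{square}, $\tilde s^{\,2}$ solves ${\bf XX}$ on $J$, hence both $S$ and $\tilde s^{\,2}$ solve the third‑order equation ${\bf XX}'$ on $J$ by Theorem \ref{X'}; and at $a$ they share the value $0$, the first derivative $0$, and the second derivative $2c^2 = \overset{\mdot\mdot}{S}(a)$. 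The uniqueness theorem for ${\bf XX}'$ then forces $S = \tilde s^{\,2}$ throughout $J$. Consequently $\tilde s = \pm\sqrt S$ on $J \setminus \{a\}$ with signs matching the choice above, so the $s$ of the first paragraph coincides with $\tilde s$ on $J$; in particular $s$ is smooth at $a$ and satisfies ${\bf PII}_0$ there, while away from $a$ it satisfies ${\bf PII}_0$ by the first paragraph. Since $(\alpha, a) \cup J \cup (a, \beta) = I$, we conclude that $s$ solves ${\bf PII}_0$ on all of $I$ and $S = s^2$, as required.

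\textbf{An alternative.} One can sidestep the appeal to existence for ${\bf PII}_0$: because $S$ satisfies the polynomial equation ${\bf XX}'$ it is real‑analytic on $I$, and since it has a zero of order exactly two at $a$ (again by $\overset{\mdot\mdot}{S}(a) \ne 0$) one may factor $S(t) = (t-a)^2 h(t)$ with $h$ analytic near $a$ and $h(a) = c^2 > 0$. Then $s(t) := (t-a)\sqrt{h(t)}$ is analytic near $a$, squares to $S$, and has the correct sign on each side, so it glues to $\mp\sqrt S$ exactly as before.

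\textbf{Main obstacle.} The whole substance of the proof is the behaviour at $a$: one must verify both that the sign‑flipped root is genuinely twice differentiable across the zero and that the two one‑sided sign choices are mutually consistent. The second point is what makes a global $s$ exist at all, and it holds precisely because \emph{any} solution of the governing equation passing through the zero must change sign there; the first point is guaranteed by the positivity $\overset{\mdot\mdot}{S}(a) > 0$ — the upgrade of Theorem \ref{ne} coming from the fact that $a$ is a minimum of the non‑negative $S$ — which forces a clean order‑two zero and hence the required regularity.
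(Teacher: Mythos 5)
Your proof is correct, but it reaches the crucial point --- twice-differentiability of the sign-flipped root at $a$ --- by a genuinely different mechanism than the paper. The paper works directly with the glued function $s$: it uses ${\bf XX}$ to rewrite $\overset{\mdot}{s}(t)^2 = \overset{\mdot}{S}(t)^2/4S(t)$ as $\tfrac12\big(\overset{\mdot \mdot}{S}(t) - 4S(t)^2 - 2tS(t)\big)$, deduces $\lim_{t \ra a}\overset{\mdot}{s}(t) = \sqrt{\tfrac12 \overset{\mdot \mdot}{S}(a)}$ after checking that $\overset{\mdot}{s} = \overset{\mdot}{S}/2s$ is positive on both sides of $a$, and then applies the mean value theorem twice to conclude that $s$ is twice differentiable at $a$ with $\overset{\mdot \mdot}{s}(a) = 0$. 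You instead manufacture the candidate from the other side: the local existence theorem for ${\bf PII}_0$ produces $\tilde s$ with the correct first-order jet at $a$, Theorems \ref{square} and \ref{X'} make $\tilde s^{\,2}$ a solution of ${\bf XX'}$ sharing third-order initial data with $S$ at $a$, and uniqueness for ${\bf XX'}$ identifies $S = \tilde s^{\,2}$ near $a$; the sign bookkeeping then shows $\tilde s$ coincides with your glued $s$ there. Your route buys a cleaner regularity argument --- all smoothness at $a$ is outsourced to the standard existence theorem, and the forced sign change is transparent --- at the cost of invoking existence for ${\bf PII}_0$, which the paper never needs; the paper's route is more hands-on but uses only the limit built into ${\bf XX}$ and the mean value theorem. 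Your analyticity alternative (factoring $S(t) = (t-a)^2 h(t)$ from ${\bf XX'}$) is also sound. One small point in your favour: you justify $\overset{\mdot \mdot}{S}(a) > 0$, rather than merely $\ne 0$, by noting that $a$ is an interior minimum of the non-negative $S$ --- a step the paper elides when it cites Theorem \ref{ne} for strict positivity.
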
 

\begin{proof} 
Define $s$ on $I$ by 
 \begin{equation*}
    s(t)=
    \begin{cases}
      - \sqrt{S(t)} & \text{if}\ I \ni t \leqslant a, \\
      + \sqrt{S(t)} & \text{if} \ I \ni t \geqslant a.
    \end{cases}
  \end{equation*}
From Theorem \ref{square} it follows that $s$ satisfies ${\bf PII}_0$ on $I \setminus \{ a \}$; we must verify that $s$ is twice-differentiable at $a$ with $\overset{\mdot \mdot}{s}(a) = 0$. First of all, note that if $I \ni t \ne a$ then $\overset{\mdot}{s}(t) = \overset{\mdot}{S}(t)/2 s(t)$ whence 
$$\overset{\mdot}{s}(t)^2 = \frac{\overset{\mdot}{S}(t)^2}{4 s(t)^2} = \frac{\overset{\mdot}{S}(t)^2}{4 S(t)} = \frac{1}{2} \Big( \overset{\mdot \mdot}{S}(t) - 4 S(t)^2 - 2 t S(t) \Big)$$
and therefore 
$$\lim_{t \ra a} \overset{\mdot}{s}(t)^2 = \frac{1}{2} \overset{\mdot \mdot}{S}(a)$$
because $S(a) = 0$. Next, $\overset{\mdot}{S}(a) = 0$ while Theorem \ref{ne} informs us that $\overset{\mdot \mdot}{S}(a) > 0$; as a consequence, $\overset{\mdot}{S}(t)$ changes from strictly negative to strictly positive as $t$ increases through $a$. Thus $\overset{\mdot}{s} = \overset{\mdot}{S}/2 s$ is strictly positive on each side of $a$ and so the taking of square-roots yields 
$$\lim_{t \ra a} \overset{\mdot}{s}(t) = \sqrt{\frac{1}{2} \overset{\mdot \mdot}{S}(a)}.$$
An application of the mean value theorem now shows that the continuous function $s$ is continuously differentiable at $a$. Finally, as $s$ satisfies ${\bf PII}_0$ away from $a$ we deduce that 
$$\lim_{t \ra a} \overset{\mdot \mdot}{s}(t) = \lim_{t \ra a} \big(2 s(t)^3 + t s(t)\big) = 0$$
and a further application of the mean value theorem to the continuous function $\overset{\mdot}{s}$ permits us to conclude that $\overset{\mdot \mdot}{s}(a)$ exists and equals $0$. 
\end{proof} 

\medbreak 

\section*{Remarks} 

\medbreak 

Here, we consider matters of related interest, particularly concerning solutions to ${\bf XX}$ that are non-positive or change sign at an isolated zero. 

\medbreak 

\begin{theorem} \label{sigma}
If $S$ is a strictly negative solution to ${\bf XX}$ then $\sigma : = \sqrt{-S}$ is a solution to 
$$\overset{\mdot \mdot}{\sigma} = t \sigma - 2 \sigma^3.$$
\end{theorem}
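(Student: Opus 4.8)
The plan is to mimic the computation already carried out for the passage from $\mathbf{XX}$ to $\mathbf{PII}_0$ via positive square-roots, but now tracking the sign change that comes from the minus sign inside $\sqrt{-S}$. Since $S$ is strictly negative, $-S$ is strictly positive and $\sigma := \sqrt{-S}$ is a well-defined, strictly positive, twice-differentiable function; in particular there is no zero to worry about, so the argument is a single direct calculation with no mean-value-theorem patching required.

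First I would record $\overset{\mdot}{\sigma} = -\overset{\mdot}{S}/(2\sigma)$, equivalently $2\sigma\overset{\mdot}{\sigma} = -\overset{\mdot}{S}$, and differentiate again to get $2\sigma\overset{\mdot\mdot}{\sigma} + 2\overset{\mdot}{\sigma}^2 = -\overset{\mdot\mdot}{S}$. Next I would substitute the expression for $\overset{\mdot\mdot}{S}$ coming from $\mathbf{XX}$, namely $\overset{\mdot\mdot}{S} = \overset{\mdot}{S}^2/(2S) + 4S^2 + 2tS$, and use $\overset{\mdot}{\sigma}^2 = \overset{\mdot}{S}^2/(4\sigma^2) = \overset{\mdot}{S}^2/(-4S) = -\overset{\mdot}{S}^2/(4S)$ to see that the $\overset{\mdot}{S}^2$-terms cancel, leaving $2\sigma\overset{\mdot\mdot}{\sigma} = -4S^2 - 2tS$. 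Finally I would rewrite the right side in terms of $\sigma$ using $S = -\sigma^2$: we get $-4\sigma^4 + 2t\sigma^2 = 2\sigma^3 \cdot (-2\sigma) + 2t\sigma \cdot \sigma$, and dividing by $2\sigma$ (legitimate since $\sigma > 0$) yields $\overset{\mdot\mdot}{\sigma} = t\sigma - 2\sigma^3$, as claimed.

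There is no real obstacle here; the only point requiring a modicum of care is bookkeeping the two sources of sign — the $-\overset{\mdot}{S}$ from differentiating $\sigma^2 = -S$, and the $S = -\sigma^2$ substitution in the polynomial part — so that one correctly lands on $t\sigma - 2\sigma^3$ rather than on $\mathbf{PII}_0$ itself. (One may note in passing that this equation is the ``$t \mapsto -t$, $\sigma \mapsto \sigma$'' avatar of $\mathbf{PII}_0$, or alternatively $\mathbf{PII}_0$ with $s$ replaced by an imaginary multiple of $\sigma$; but for the proof the direct verification above suffices.)
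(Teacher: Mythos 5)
Your proposal is correct and follows essentially the same route as the paper: differentiate $S=-\sigma^2$ twice, invoke $\mathbf{XX}$ so that the $\overset{\mdot}{S}{}^2$ terms cancel, and divide by $-2\sigma$ (legitimate since $\sigma>0$). The paper's proof is merely a terser statement of the identical computation.
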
 

\begin{proof} 
Direct calculation from $S = - \sigma^2$ gives $\overset{\mdot}{S} = - 2 \sigma \overset{\mdot}{\sigma}$ and $\overset{\mdot \mdot}{S} = - 2 \sigma \overset{\mdot \mdot}{\sigma} - 2 \overset{\mdot}{\sigma}^2$; cancellation of $ - 2 \sigma$ following the invocation of ${\bf XX}$ concludes the argument. 
\end{proof} 

\medbreak 

Conversely, if $\sigma$ is a nowhere-zero solution to this differential equation, then it is readily checked that $S : = - \sigma^2$ is a strictly negative solution to ${\bf XX}$. To interpret the differential equation displayed in the theorem, notice that $\sigma$ satisfies this equation precisely when $s := i \sigma$ satisfies ${\bf PII}_0$. Of course, this interpretation is not entirely unexpected. 

\medbreak 

We leave the reader to contemplate the non-positive case, merely remarking that if $S$ is a solution to ${\bf XX}$ that has a single zero but is otherwise negative then $S = - \sigma^2$ for some (sign-changing) solution $\sigma$ to the differential equation of Theorem \ref{sigma}. 

\medbreak 

Our results on non-negative and non-positive solutions to ${\bf XX}$ are nicely complemented by the following result. 

\medbreak 

\begin{theorem} 
A solution to ${\bf XX}$ cannot change sign at an isolated zero. 
\end{theorem}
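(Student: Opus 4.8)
The plan is to read off the result directly from the two facts already secured about a solution $S$ to $\mathbf{XX}$ with an isolated zero at $a \in I$. First, $S$ is twice differentiable at $a$ with $S(a) = \overset{\mdot}{S}(a) = 0$, the vanishing of the first derivative being forced by $\mathbf{XX}$ itself and the existence (and continuity) of $\overset{\mdot \mdot}{S}$ at $a$ being guaranteed by the convention that the quotient on the right side of $\mathbf{XX}$ is read as a limit. Second, $\overset{\mdot \mdot}{S}(a) \ne 0$ by Theorem \ref{ne}. In short, $a$ is a critical point of $S$ at which the second derivative is nonzero, and from this a neighbourhood-of-$a$ sign analysis will finish the argument.

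Concretely, I would invoke Taylor's theorem with the Peano form of the remainder — which needs only twice-differentiability at the single point $a$ — to write, using $S(a) = \overset{\mdot}{S}(a) = 0$,
$$S(t) = \tfrac{1}{2}\,\overset{\mdot \mdot}{S}(a)\,(t-a)^2 + o\big((t-a)^2\big) \qquad (t \to a),$$
so that $S(t)/(t-a)^2 \to \tfrac{1}{2}\,\overset{\mdot \mdot}{S}(a)$ as $t \to a$. Since this limit is nonzero, there is a punctured neighbourhood of $a$ throughout which $S(t)$ has the same sign as $\overset{\mdot \mdot}{S}(a)$ — in particular, the same sign on both sides of $a$. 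As $a$ is the only zero of $S$, that sign persists on all of $I$; in any event $S$ does not change sign at $a$, which is the assertion. Alternatively, one may bypass Taylor's theorem: from $\overset{\mdot}{S}(a) = 0$ and $\overset{\mdot \mdot}{S}(a) \ne 0$ the ratio $\overset{\mdot}{S}(t)/(t-a)$ tends to $\overset{\mdot \mdot}{S}(a)$, so $\overset{\mdot}{S}$ is of one fixed sign just to the left of $a$ and of the opposite sign just to the right; hence $S$ has a strict local extremum at $a$, and since $S(a) = 0$ the nearby values of $S$ are all of one sign.

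I do not anticipate a genuine obstacle: the statement is essentially a corollary of Theorem \ref{ne}. The only point deserving care is the logical pedigree of the two ingredients — that $\overset{\mdot \mdot}{S}(a)$ truly exists (from the limit convention built into $\mathbf{XX}$) and that it is nonzero (Theorem \ref{ne}, itself resting on uniqueness for $\mathbf{XX'}$). With those in place, the sign analysis above is routine.
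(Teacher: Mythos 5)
Your proposal is correct and follows the paper's own route: both arguments reduce the statement to Theorem \ref{ne} (that $\overset{\mdot \mdot}{S}(a) \ne 0$ at an isolated zero) and then read off the constant sign of $S$ near $a$ from the sign of the second derivative. You merely spell out the final sign analysis (via Taylor's theorem with Peano remainder) more explicitly than the paper, which simply asserts it.
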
 

\begin{proof} 
According to Theorem \ref{ne}, if the solution $S$ to ${\bf XX}$ has an isolated zero at $a$ then either $\overset{\mdot \mdot}{S}(a) > 0$ (in which case $S$ is strictly positive on each side of $a$) or $\overset{\mdot \mdot}{S}(a) < 0$ (in which case $S$ is strictly negative on each side of $a$). 
\end{proof} 

\medbreak 

By sharp contrast, a solution to ${\bf PII}_0$ {\it must} change sign at an isolated zero, as noted in the first proof of Theorem \ref{notroot}; of course, this circumstance also bears on Theorem \ref{square} and Theorem \ref{root}. 

\bigbreak

\begin{center} 
{\small R}{\footnotesize EFERENCES}
\end{center} 
\medbreak 

[Ince] E.L. Ince. {\it Ordinary Differential Equations}, Longman, Green and Company (1926); Dover Publications (1956). 

\medbreak

\end{document}